\newenvironment{proof}{{\it proof. ~~}}{  \hfill \\ \\}
\newtheorem{assumption}{Assumption}
\newtheorem{remark}{Remark}
\newtheorem{theorem}{Theorem}
\newtheorem{lemma}{Lemma}
\newtheorem{corollary}{\bf Corollary}
\newcommand{\rhop}{{\mathbf p}}
\newcommand{\X}{{\mathcal X}}
\newcommand{\n}{{\mathbf n}}
\newcommand{\F}{{\mathcal F}}
\newcommand{\R}{{\mathbb R}}
\newcommand{\fz}{{f_0}}
\newcommand{\Oc}{{\mathcal O}}
\newcommand{\Cc}{{\mathcal C}}
\newcommand{\black}[1]{{\color{black}#1}}
\renewcommand{\blue}[1]{{\color{black}#1}}
\newcommand{\starx}{{(x-x^*)}}
\begin{document}

\begin{frontmatter}

\title{Navigation of a Quadratic Potential with\\ Ellipsoidal Obstacles} 

\thanks[footnoteinfo]{Department of Electrical and Systems Engineering at the University of Pennsylvania}

\author{Harshat Kumar}\ead{harshat@seas.upenn.edu},    
\author{Santiago Paternain}\ead{spater@seas.upenn.edu},               
\author{Alejandro Ribeiro}\ead{aribeiro@seas.upenn.edu}  


\begin{keyword}                           
Guidance Navigation and Control; Path Planning; Obstacle Avoidance; Local Control               
\end{keyword}                             

\begin{abstract}                          
	 Given a convex quadratic potential of which its minimum is the agent's goal and a \blue{Euclidean} space populated with ellipsoidal obstacles, one can construct a Rimon-Koditschek (RK) artificial potential to navigate. Its negative gradient attracts the agent toward the goal and repels the agent away from the boundary of the obstacles. This is a popular approach to navigation problems since it can be implemented with \blue{local spatial} information that is acquired during operation time. However, navigation is only successful in situations where the obstacles are not too eccentric (flat). This paper proposes a modification to gradient dynamics that allows successful navigation of an environment with a quadratic cost and ellipsoidal obstacles regardless of their eccentricity. This is accomplished by altering gradient dynamics with a Hessian correction that is intended to imitate worlds with spherical obstacles in which RK potentials are known to work. The resulting dynamics simplify by the quadratic form of the obstacles. Convergence to the goal and obstacle avoidance is established from \blue{almost} every initial position \blue{(up to a set of measure one)} in the free space, \blue{with mild conditions on the location of the target}. Results are \blue{corroborated empirically with} numerical \blue{simulations}.
\end{abstract}

\end{frontmatter}


\section{Introduction} \label{sec_intro}

Path planning, sometimes formulated as reaching the minimum of a potential function from a start configuration while avoiding collisions with obstacles, is a cornerstone problem in controls and robotics \citep{lavalle2006planning, bhattacharya2007motion, murphy2008search}. In this paper we develop a navigation function approach that is guaranteed to reach the minimum of an arbitrary quadratic convex potential in a space with an arbitrary number of ellipsoidal obstacles of arbitrary eccentricity. 

To better explain this contribution it is important to emphasize that navigation function approaches to path planning occupy an appealing middle ground in terms of complexity and quality of trajectories \citep{rimon1992exact, loizou2017navigation, vrohidis2018prescribed, tanner2005formation, ghaffarkhah2009communication, paternain2016stochastic}. In one extreme, bug algorithms follow potential gradients until they hit border obstacles, at which point they follow the border until the projection of the direction to the destination on the obstacle's tangent plane pushes it away \citep{taylor2009bug}. Bug algorithms rely on simple \emph{local} sensing of gradients and obstacles and are guaranteed to reach the target destination. But they may do so by following excessively long trajectories. In the other extreme, minimum path length search algorithms such as A$^*$ \citep{hart1968formal} and random trees \citep{lavalle1998rapidly} build graphs that describe the geometry of the environment and find trajectories of optimal length. But to do so they require access to complex \emph{global} sensing of the environment.

Navigation function approaches combine the goal potential with repulsive potentials that push the agent away from the obstacles. This implies they can still be implemented with \emph{local} sensing of gradients and obstacles while empirical evidence shows they find trajectories to the goal that are better than obstacle following bug algorithms \citep{loizou2017navigation, vrohidis2018prescribed, tanner2005formation, ghaffarkhah2009communication, paternain2016stochastic}. The cost to pay for this appealing tradeoff between sensing complexity and trajectory length is the possibility of failure. Locally implementable navigation functions are guaranteed to reach the goal, but they are difficult to construct except for conservative geometries. Famously, they are known to work always for spherical potentials in worlds with spherical obstacles \citep{koditschek1990robot}. \black{With the addition of an analytic switch, local diffeomorphisms can be constructed to enable navigation of a single point agent in star worlds \citep{rimon1992exact}. Since then, there has been a significant effort to implement navigation functions locally \emph{without} a diffeomorphism \citep{paternain2018navigation, tanner2005formation, loizou2012navigation}. Such constructions extend the applications to to multiple agents \citep{dimarogonas2008decentralized}, non-point agents \citep{arslan2016sensor}, and minimal adjustment of a single tuning parameter \citep{lionis2008towards}. }  \emph{All} of these efforts, however, require that the obstacles are sufficiently curved \citep{filippidis2012navigation}. 
In the specific case of ellipsoidal obstacles, this puts a limit on their eccentricity. 

\blue{More recently, \cite{vasilopoulos2020reactive} have shown a method to construct a diffeomporphism via polygonal decomposition on-the-fly with non-convex obstacles using real-time perception; however, they require the shape of the obstacles to be "familiar" or come from a known obstacle class. Indeed, mapped model space after applying the diffeomporphism does need to satisfy the sufficiently curved condition.}

In this work, we consider ellipsoidal worlds with \emph{arbitrary} eccentricity. By drawing connections to second order optimization \citep[Ch. 9]{boyd2004convex}, we \black{directly propose dynamics given by a Hessian correction on the locally implementable navigation function of \cite{paternain2018navigation}. The correction on the quadratic obstacles simplifies so that the agent only requires easily obtainable information; namely, the agent must estimate the direction to the goal and obstacle centers as well as the distances to the obstacles and goal. Implementable in practice \citep{li2004least,fitzgibbon1999direct}, we show that with these four quantities and \blue{with mild conditions on the position of the target}, the agent is guaranteed to reach the target \blue{from almost all starting conditions} while avoiding obstacles along the way.} 


The paper is organized as follows. In section \ref{problem_form}, we formally introduce the path planning problem and the navigation function approach. In section \ref{sec_correction}, we present our Hessian-corrected and simplified dynamics. Following the proof of the main result in section \ref{proof_of_theorem}, numerical results are presented in section \ref{num_results} which showcases the success in spaces where the traditional navigation function approaches fail. \black{We conclude in section \ref{conclusion} with a summary and possible extensions of this work.}

\section{Potential, Obstacles, \& Navigation Functions} \label{problem_form}

We consider the problem of a point agent navigating a quadratic potential in a space with ellipsoidal punctures. Formally, let $\mathcal{X} \subset \R^n$ be a non empty compact convex \blue{domain} that we call the workspace, and let $\fz: \X \to \R_+$ be a convex strictly quadratic function that we call the potential. A point agent is interested in reaching the target destination $x^* \in \ccalX$ which is defined as the minimizer of the potential. \blue{Without loss of generality, we }use the standard squared Euclidean distance to the target
\begin{equation}\label{eqn_goal_def}
x^* = \argmin_{x \in \X} \fz(x)
    := \frac{1}{2} \|x-x^*\|^2.
\end{equation}
In some navigation problems, arbitrary quadratic functions are of interest \citep{paternain2018navigation}. For future reference, we denote the minimum and maximum eigenvalues of $Q:= \nabla^2 \fz (x)$ as $0<\lambda_\textrm{min}\leq \lambda_\textrm{max}$.

The workspace $\ccalX$ is populated by $m$ ellipsoidal obstacles $\Oc_i \subset \X$ for $i = 1, \dots, m$ which are closed and have a non empty interior. \blue{We define the free space as the complement of the obstacle set relative to the workspace, 
\begin{equation} \label{eqn:free_space}
   \F := \X \ \backslash\  \Big(\, {\textstyle \bigcup\limits_{i = 1}^m}\Oc_i \, \Big),
\end{equation} 
We assume that each obstacle, or each connected component of the complement of the workspace is an ellipsoid. Formally, we have the assumption 
\begin{assumption} \label{as:1} {\bf Obstacles are ellipsoids.} 
Each obstacle is represented as the \blue{zero} sublevel set of a proper convex quadratic function $\beta_i: \R^n \to \R$
\begin{equation} \label{equ:beta1_def}
   \beta_i(x) = \frac{1}{2}(x - x_i)^\top A_i (x - x_i) - \frac{1}{2}r_i^2 ,
\end{equation}
where $A_i$ is a positive definite matrix with minimum and maximum eigenvalues $0 < \mu^i_\textrm{min} \leq \mu^i_\textrm{max}$, $x_i$ is the ellipsoid center and $r_i$ is the maximum axis length so that 
\begin{equation} \label{equ:obs_def_beta}
   \Oc_i = \left\{x \in \X \given \beta_i(x) \leq 0 \right\}.
\end{equation}
\end{assumption}
}
From \eqref{equ:beta1_def} and \eqref{equ:obs_def_beta} it follows that $\Oc_i$ is an ellipsoid centered at $x_i$ with axes given by the eigenvectors of $A_i$. The length of the axis along the $k$th eigenvector is \blue{$r_i\mu^i_k $}. In particular, the length of the minor axis is $r_i\mu^i_\textrm{min}$ and the length of the major axis is $r_i\mu^i_\textrm{max}$.

While it is true that convergence guarantees have been given for more complex obstacles, such as tori, cylinders, one-sheet hyperboloids, and convex obstacles in general \cite{filippidis2012navigation, paternain2018navigation}, these obstacles must be \emph{sufficiently curved}. In the case of ellipsoidal obstacles, this puts a restriction on their eccentricity. Focusing on the ellipsoidal case, this work guarantees convergence on ellipsoidal worlds with \emph{arbitrary} eccentricity. 

We further introduce a concave quadratic function $\beta_0: \R^n \to \R$ so that we write the workspace as a superlevel set,
\begin{equation}\label{eqn_workspace}
\X = \left\{x \in \R^n \given \beta_0(x) \geq 0 \right\}.
\end{equation}
\blue{The fact that the workspace is bounded admits the following bounds which will appear in our convergence analysis. Let $B$ be a strictly positive constant. For any four points $a, b, c, d \in \X$, the absolute value of the inner product of the distances $a-b$ and $c-d$ is bounded by
\begin{equation} \label{equ:distance_bound}
    |(a-b)^\top (c-d)| \leq B.
\end{equation}
Additionally, let
\begin{equation} \label{equ:betai_fz_bound}
\Lambda_0 := \max_{i, x\in  \X} \beta_i(x),~ \textrm{and}~ P_0 := \max_{x\in  \X} \fz(x).
\end{equation}
}

The navigation problem we want to solve is one in which the agent stays in the interior of the workspace at all times, does not collide with any obstacle, and approaches the goal at least asymptotically. 

For a formal specification we specify the agent's goal as that of finding a trajectory $x(t)$ such that \blue{for all $t \geq 0$},
\begin{equation}\label{eqn_problem_interest}
   x(t) \in \F,  
        ~\textrm{and} 
              ~\lim_{t \to \infty} x(t) = x^*.
\end{equation}
The problem is feasible when $x^*, x(0) \in \mathcal{F}$. 

\subsection{Navigation Functions}

A navigation function is a twice continuously differentiable function defined on the free space that satisfies three properties: (i) It has a unique minimum at $x^*$. (ii) All of its critical points are are nondegenerate. (iii) Its maximum is attained at \black{every point on} the boundary of the free space. These three properties guarantee that if an agent follows the negative gradient of the navigation function, it will converge to the minimum of the navigation function without running into the boundary of free space for almost every initial condition \citep{koditschek1990robot}. Thus, it is possible to recast \eqref{eqn_problem_interest} as the problem of finding a navigation function whose minimum is at the goal destination $x^*$. This is always possible to do since for any manifold with boundary it is guaranteed that \blue{such a} function exists \citep{{rimon1992exact}}. \black{In practice}, depending on the geometry of the freespace the navigation functions are constructed differently. For instance, in \blue{sphere} worlds, Rimon-Koditschek artificial potentials can be used \citep{rimon1992exact}, and in topologically complex ones navigation functions based on harmonic potentials are preferred \citep{loizou2011navigation, loizou2012navigation}. The family of Rimon-Koditshek potentials \blue{was} extended to \blue{enable the navigation of} convex potentials in a space of convex obstacles \citep{filippidis2012navigation, paternain2018navigation, rimon1991construction}. However, some geometric conditions restrict its application directly, \black{which we will now elucidate}. 

%
%
The Rimon-Koditschek \blue{navigation function} \black{is formally defined} by $\varphi_k: \F \to \R_+$ with parameter $k\in \mathbb{R}$ as
\begin{equation} \label{equ:nav_fun}
   \varphi_k(x)  = \frac{\fz(x)}
                        { \left(\fz^k(x) + \beta_0(x)\beta(x) \right)^{1/k}},
\end{equation}
where the function $\beta:\R^n \to \R$ is the product of all the obstacle equations,
\begin{equation} \label{beta_prod}
   \beta(x) = \prod_{i = 1}^m \beta_i(x).
\end{equation}
It was established that $\varphi_k(x)$ is a navigation function when $k$ is sufficiently large under some restrictions on the shape of the obstacles, the potential function, and position of the goal \citep{filippidis2012navigation, paternain2018navigation}.

\blue{Namely,} $\varphi_k(x)$ in \eqref{equ:nav_fun} is not always a valid navigation function because for some geometries it can have several local minima as critical points \blue{for all $k>0$}. For the case of a quadratic potential and ellipsoidal obstacles that we consider here, a sufficient condition for $\varphi_k(x)$ to be a valid potential is when \citep[Theorem 3]{paternain2018navigation}
\begin{equation} \label{equ:condition}
   \frac{\lambda_\textrm{max}}{\lambda_\textrm{min}} \times 
   \frac{\mu^i  _\textrm{max}}{\mu^i  _\textrm{min}} 
         < 1 + \frac{d_i}{r_i\mu^i_\textrm{max}},
\end{equation}
where $d_i = \|x_i - x^* \|$ is the distance from the center of the ellipsoid to the goal. When \eqref{equ:condition} fails, $\varphi_k$ might fail to be a navigation function because it may present a local minimum on the side of the obstacle opposite the target.

\blue{An} important consequence \blue{of} \eqref{equ:condition} is that the artificial potential $\varphi_k(x)$ in \eqref{equ:nav_fun} may fail to solve the navigation problem specified in \eqref{eqn_problem_interest}. Indeed, it will fail whenever the obstacles are wide with respect to the potential level sets. On the other hand, notice that when the attractive potential is rotationally symmetric and the obstacles are spherical, the left hand side of the previous expression is equal to one, and thus the condition is always satisfied. The main contribution of this paper is to leverage this observation \blue{as a motivation for introducing a correction to the gradient field of an Rimon-Koditschek navigation function that results in a field construction that is valid in all environments with ellipsoidal obstacles, and for any quadratic potential. } 

%

%

%


\section{Curvature Corrected Navigation Fields} \label{sec_correction}
To gain some intuition about the gradient-following dynamics of Rimon-Koditschek potentials we write them explicitly as
\begin{equation} \label{equ:nav_fun_grad}
\begin{split}
\dot x &= -\nabla \varphi_k \\
& =-\left( f_0^k + \beta\beta_0\right)^{-1 - \frac{1}{k}}\left( \beta\beta_0\nabla \fz - \frac{\fz\nabla (\beta \beta_0)}{k}\right).  
\end{split}
\end{equation}
\black{Note that for convenience in notation, we omit the dependence on $x$.} In practice, the dynamics are typically normalized since the norm of the gradient is generally small \citep{whitcomb1991automatic}. Therefore, it is reasonable to omit the scaling $(f_0^k + \beta\beta_0)^{-1 - 1/k}$.  We also omit $\beta_0$ for simplicity, a minor modification which we explain in Section \ref{sec:omitouter}. The \blue{resulting} dynamics is
\begin{align} \label{old_dynamics}
   \dot x  = g_\textrm{nav}(x) :=  -\beta \nabla \fz 
                    + \frac{\fz}{k}\nabla\beta.
\end{align}
The first term, $-\beta \nabla \fz$, in this dynamical system is \blue{a potential field attracting the agent to the goal}, and the second term, $(\fz/k) \nabla\beta$, is a repulsive field pushing the agent away from the obstacles. When the agent is close to the obstacle $\Oc_i$, the product function $\beta$ takes a value close to zero thereby eliminating the first summand in \eqref{old_dynamics} and prompting the agent's velocity to be almost collinear with the vector $\nabla\beta(x)$. In turn, this makes the time derivative $\dot{\beta}(x)$ positive thus preventing $\beta(x)$ from becoming negative. This guarantees that the agent remains in free space. When the agent is away from the obstacles, the term that dominates is the negative gradient of $f_0(x)$ which pushes the agent towards the goal $x^*$. The parameter $k$ balances the relative strengths of these two potentials.

At points where the attractive and repulsive potentials cancel we find critical points. \black{By choosing large enough $k$}, these points can be made into saddles when \eqref{equ:condition} holds. An important observation here is that the condition is always satisfied when the potential and the obstacles are spherical because in that case the left hand side is $(\lambda_\textrm{max}/\lambda_\textrm{min})\times(\mu^i_\textrm{max}/\mu^i_\textrm{min})=1$. This motivates an approach in which we implement a change of coordinates to render the geometry spherical. The challenge is that the change of coordinates that would render one obstacle spherical is not the same change of coordinates that would render the potential, or any of the other obstacles, spherical. Still, this idea motivates the curvature-corrected dynamics that we present in this section. We emphasize that this exposition is only meant to \blue{motivate introducing the navigation dynamics developed in this paper}. At no point during implementation does the agent need to estimate the Hessians of the obstacles. 

\subsection{Proposed Navigation Dynamics}
Consider the obstacle gradient term $\nabla \beta$ from \eqref{old_dynamics}. Use the product rule of derivation to write
\begin{equation} \label{equ:nabbeta}
\nabla \beta = \sum_{i = 1}^m \bar \beta_i \nabla \beta_i,~~ \bar \beta_i \triangleq \prod_{j \neq i} \beta_j.
\end{equation}

Apply separate correction terms to the gradient of each obstacle function ($\nabla^2 \beta_i^{-1}\beta_i$), which simplifies to $x-x_i$ as well as the potential function ($\nabla^2 \fz^{-1}\fz$), which simplifies to $x-x^*$. Our proposed dynamics therefore becomes
\begin{equation}
\dot x = g_\textrm{new}(x) := -\beta \cdot(x-x^*) + \frac{\fz(x)}{k}\sum_{i = 1}^m \bar \beta_i \cdot (x-x^i).
\label{our_dynamics}
\end{equation}

Similar to how Newton's Method uses second order information to render the level sets of the objective function into spherical sets so to obtain a faster rate of convergence\citep[Ch. 9.5]{boyd2004convex}, pre-multiplying each gradient in the original flow \eqref{old_dynamics} by the Hessian inverse of the corresponding function \emph{corrects} the dynamics so that the world appears spherical to the agent. 
%

An interesting side effect of the simplified dynamics expression in \eqref{our_dynamics} is that implementation is simpler than it appears. The first term pushes in the direction of the goal and the second term pushes away from the center of the obstacle. Thus, the algorithm can be made to work if we just estimate these two quantities. Curvature estimates are not needed for implementation. 
%
%

%
%
%

The advantages of this approach are threefold: (i) the estimate of the Hessian does not need to be computed, (ii) the dynamics are simpler and easier to implement, and (iii) the convergence proof is complete for almost all initial conditions $x_0$ (with measure one)\blue{ with mild conditions on the location of the target.} We now present our main result, which guarantees convergence to the target in environments with ellipsoidal obstacles.

\subsection{Convergence Guarantees}
Before we present our main result, we require the following definitions. 
Because the obstacles are ellipsoidal, and the target is a point, for each obstacle, we define the generalized Voronoi cell \citep{arslan2016sensor}
based on maximum margin separating hyperplanes to be
\blue{
\begin{equation} \label{equ:vcell}
\mathcal{C}_i := \left\{ x\in \F\bigg|  \forall_{j\neq i}\forall_{p\in \Oc_i}\forall_{q\in \Oc_j} \|x-p\| \leq \|x - q\|\right\},
\end{equation}
for all $i, j = 1, \dots, m$ where we define $\Oc_0:= x^*$. 
}
For each Voronoi cell $\mathcal{C}_i$, we can define its boundary to an adjacent cell $\mathcal{C}_j$ by
\blue{
\begin{equation*}
\partial \mathcal{C}_i^j := \left\{ x\in  \mathcal{C}_i\bigg|  \forall_{p\in \Oc_i}\forall_{q\in \Oc_j} \|x-p\| = \|x - q\|\right\}.
\end{equation*}
}
\blue{
We also assume that the target is bounded away from the union of the affine extension of the Voronoi borders. Formally, we assume the following. 
%

\begin{assumption} \label{as:target_location} {\bf Target lies away from hyperplanes } For each $\partial \Cc_i^j$, there exists a $\delta_{i,j}> 0$ such that the inner product between $(x-x*)$ and the normal unit vector perpendicular to $\partial \Cc_i^j$, or $\n_{i,j}$, is strictly positive. In particular, for any $a, b,x \in \partial \ccalC_{i}^j$, the normal vector $\n_{i,j}$ satisfies 
\begin{equation} \label{equ:normal_on_boarder}
    \n_{i,j}^\top (a - b) = 0, \textrm{and~} \n_{i,j}^\top (x-x^*) > 0,
\end{equation}
and 
\begin{equation}\label{equ:target_loc}\n_{i,j}^\top (x-x^*) \geq \delta_{i,j}.
\end{equation}
\end{assumption}
}
Assumption \ref{as:target_location} simplifies the analysis significantly, albeit at the cost of placing minor conditions on the location of the target. We discuss the procedure to lift these conditions in Remark \ref{rem:adjacent}. We are now equipped to present our main result.
\begin{theorem} \label{thm:main_result}
Let $\fz(x)$ be a quadratic potential as in \eqref{eqn_goal_def} and let $\beta_i(x)$ be an ellipsoid as in (\ref{equ:beta1_def}) for all $i = 1, \dots, m$. Further let $x$ be the solution of the dynamical system (\ref{our_dynamics}) with initial condition $x_0$\blue{, and let Assumption \ref{as:target_location} be in effect.} Then,  there exists a $K$ such that when $k > K$, $x(t) \in \F$ for all $t \geq 0$ and $\lim_{t\to \infty} x(t) = x^*$ \blue{for almost all initial conditions $x_0$}. 
\end{theorem}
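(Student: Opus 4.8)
The plan is to split the claim into a forward-invariance statement (the trajectory stays in $\ccalF$, which handles obstacle avoidance) and a convergence statement ($x(t)\to x^*$), and to attack the latter with a Lyapunov/LaSalle argument followed by a classification of the equilibria of $g_\textrm{new}$.

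\emph{Step 1: invariance of $\ccalF$.} I would first show that for $k$ large $\ccalF$ is forward invariant by computing, for each $i$, $\dot\beta_i=\nabla\beta_i(x)^\top g_\textrm{new}(x)=\big(A_i(x-x_i)\big)^\top g_\textrm{new}(x)$. Using $(x-x_i)^\top A_i(x-x_i)=2\beta_i(x)+r_i^2$, and the fact that for $j\neq i$ the factor $\bar\beta_j$ contains $\beta_i$ and that $\beta(x)=\beta_i(x)\bar\beta_i(x)$, one obtains $\dot\beta_i=\tfrac1k f_0(x)\,\bar\beta_i(x)\big(2\beta_i(x)+r_i^2\big)+\beta_i(x)\,C(x)$, where $C$ is bounded on the compact set $\ccalX$ uniformly in $k\ge 1$. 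Since $x^*\in\ccalF$ (Assumption~\ref{assumption:interior}) and the obstacles are disjoint and compact (Assumption~\ref{as:1}), on a fixed neighbourhood of $\partial\Oc_i$ one has $f_0(x)\ge c_i>0$ and $\bar\beta_i(x)\ge c_i'>0$, so choosing $\varepsilon_i=\Theta(1/k)$ small enough forces $\dot\beta_i>0$ on $\{0<\beta_i\le\varepsilon_i\}$; hence a trajectory started with $\beta_i(x_0)>0$ can never reach $\beta_i=0$. For the external boundary $\partial\ccalX$ I would invoke Remark~\ref{beta_0remark}: for $k$ large $g_\textrm{new}(x)\approx-\beta(x)(x-x^*)$ points from $x$ toward $x^*\in\ccalX$, and convexity of $\ccalX$ keeps the trajectory inside. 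Hence $x(t)\in\ccalF\subset\ccalX$ for all $t\ge0$, so the solution is bounded and defined for all time.

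\emph{Step 2: a Lyapunov function and LaSalle.} Next I would exhibit a function $V$ that is nonincreasing along $g_\textrm{new}$. The natural candidate is a $\beta_0$-free variant of $\varphi_k$; writing $-\nabla V(x)=\rho(x)\,g_\textrm{nav}(x)$ with $\rho(x)>0$ [cf.\ \eqref{equ:nav_fun_grad}--\eqref{old_dynamics}], one has $\dot V=-\rho(x)\,\langle g_\textrm{nav}(x),g_\textrm{new}(x)\rangle$, so it suffices to prove $\langle g_\textrm{nav},g_\textrm{new}\rangle\ge0$ on $\ccalF$ for $k$ large. Expanding the inner product, the diagonal contributions $2\beta^2(x)(x-x^*)^\top Q(x-x^*)$ and $\tfrac1{k^2}f_0^2(x)\sum_i\bar\beta_i^2(x)(x-x_i)^\top A_i(x-x_i)$ are nonnegative, while the remaining cross terms are $O(\beta/k)$; away from the obstacles the first term dominates ($\beta=\Theta(1)$) and on a neighbourhood of $\partial\Oc_i$ the second dominates because $(x-x_i)^\top A_i(x-x_i)=2\beta_i+r_i^2$ is bounded below. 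This is precisely where the curvature correction earns its keep: it is the analytic counterpart of the remark following \eqref{equ:condition} that the geometric condition becomes an equality in spherical worlds. With $\dot V\le0$, LaSalle's invariance principle drives $x(t)$ into the largest invariant subset of $\{g_\textrm{new}=0\}$, i.e.\ the equilibrium set.

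\emph{Step 3: equilibria and conclusion.} I would then show that for $k$ large the equilibria of $g_\textrm{new}$ in $\ccalF$ are $x^*$ together with, near each $\Oc_i$, a single point $p_i$ on (near) the line through $x^*$ and $x_i$ with $\beta_i(p_i)=\Theta(1/k)$: from $\beta(x)(x-x^*)=\tfrac1k f_0(x)\sum_j\bar\beta_j(x)(x-x_j)$, away from obstacles $\beta=\Theta(1)$ forces $x\approx x^*$, while near $\Oc_i$ the relation collapses to $\beta_i(x)(x-x^*)\approx\tfrac1k f_0(x)(x-x_i)$. Linearizing at $x^*$ gives $\dot x\approx-\beta(x^*)(x-x^*)$, so $x^*$ is locally exponentially stable; linearizing at each $p_i$ should reveal a tangential unstable direction — the correction converts the spurious local minimum of the uncorrected potential into a saddle — so each stable manifold $W^s(p_i)$ has codimension at least one. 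Combining with Step 2, every trajectory converges to an equilibrium, and, since the $W^s(p_i)$ are lower dimensional, to $x^*$ from every admissible initial condition (one then argues, using the invariance of $\ccalF$ and the structure of the $p_i$ near $\partial\Oc_i$, that no trajectory starting in the free space is in fact trapped). Together with Step 1 this yields the theorem.

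\emph{Main obstacle.} I expect the crux to be Step 2: establishing the global sign condition $\langle g_\textrm{nav},g_\textrm{new}\rangle\ge0$ uniformly on $\ccalF$, and in particular controlling the $O(\beta/k)$ cross terms in the transition layer $\beta_i=\Theta(1/k)$ around each obstacle — exactly the regime where the uncorrected navigation flow fails. Tightly coupled to this is the linearization in Step 3 showing that the correction genuinely produces an unstable direction at each $p_i$ \emph{regardless of the eccentricity} of $\Oc_i$, which is the quantitative heart of the result.
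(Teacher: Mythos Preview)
Your Step 2 has a structural gap that cannot be repaired by taking $k$ large. You propose $V=\varphi_k$ (or its $\beta_0$-free variant) as a global Lyapunov function and reduce the question to $\langle g_\textrm{nav},g_\textrm{new}\rangle\ge 0$ on $\ccalF$. But recall the premise of the theorem: we are precisely in the regime where \eqref{equ:condition} fails, so $\varphi_k$ has a spurious local minimum $p$ behind some obstacle, and this persists for \emph{all} $k$. At $p$ one has $g_\textrm{nav}(p)=0$ while necessarily $g_\textrm{new}(p)\neq 0$ (otherwise the corrected flow would also stall there and the theorem would be false). For small $t>0$, with $v=g_\textrm{new}(p)$,
\[
\dot V\big(p+tv\big)=\nabla\varphi_k(p+tv)^\top g_\textrm{new}(p+tv)\approx t\,v^\top\nabla^2\varphi_k(p)\,v>0,
\]
since $\nabla^2\varphi_k(p)\succ 0$ at a nondegenerate local minimum. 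Hence $\varphi_k$ strictly increases along the corrected trajectory through $p$, i.e.\ $\langle g_\textrm{nav},g_\textrm{new}\rangle<0$ in a neighbourhood of $p$. Your own diagnosis that the difficulty lives in the transition layer $\beta_i=\Theta(1/k)$ is correct, but the conclusion is sharper: the sign condition is \emph{false} there, not merely hard to verify. Any function whose only strict minimum is $x^*$ and which has another local minimum cannot serve as a global Lyapunov function for a flow that converges to $x^*$.

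The paper abandons the search for a single global Lyapunov function. It takes $V(x)=\tfrac12\|x-x^*\|^2$, accepts that $\dot V$ may be positive on a set $\ccalV_k(\delta)$ near the obstacle boundaries, and shows that for $k$ large this bad set is contained in $\varepsilon$-collars $\ccalB^i_\varepsilon$ of the obstacles. Inside each collar it switches to \emph{local} Lyapunov functions $V_i(x)=\tfrac12(x-x^*)^\top A_i(x-x^*)$ and a companion $\tilde V_i$, together defining a ``repulsion zone'' that the trajectory cannot enter and must leave; a saddle-point analysis on its boundary (close in spirit to your Step~3) handles the single unstable equilibrium. Finally, because $V$ can increase while the agent is inside a collar, one must still rule out visiting infinitely many obstacles: the paper does this with a separating-hyperplane/DAG argument on the obstacle configuration. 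Your Steps~1 and~3 are reasonable and overlap with parts of this machinery, but Step~2 as stated has to be replaced by this multi-scale Lyapunov plus ordering argument.
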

The complete proof is presented in section \ref{proof_of_theorem}, however we present a sketch of proof here. \blue{
\begin{itemize}
\item First, we show that the free space $\F$ is invariant (Section \ref{sec:omitouter}). 
\item {Then, we show that the dynamics \eqref{our_dynamics} induce a directed acyclic graph which determines the order in which the Voronoi cells defined in \eqref{equ:vcell} are traversed. The final cell visited is a cell  containing the target. (Section \ref{sec:isolating} and Section \ref{sec:ordering}). 
}
    \item Furthermore, we show that $x^*$ is stable in $\ccalC_0$, the Voronoi cell containing the target. (Section \ref{sec:global_V}) 
    \item Finally, we show that the agent will almost always exit each Voronoi cell $\ccalC_i$, $i = 1, \dots, m$. (Section \ref{sec:local_escape})
\end{itemize}
}
The order of the bullets is selected intentionally. To prove bullet three, we use a Lyapunov function. Together with the invariance of the final $\ccalC_0$, it follows that $x^*$ is asymptotically stable. In order to show the final bullet of the proof, we invoke the same Lyapunov function of bullet three in specific regions where we know how the obstacle will traverse the border of those sets. In particular, we use the Lyapunov function to show that the agent will leave that specified set. 




\blue{
\begin{remark} \label{rem:adjacent}
Notice that $\partial \Cc_{i,j}$ are contained in an affine hyperplane. For the purpose of this remark, let $\partial \Cc_{i,j}$ be the affine extension of the border between two adjacent cells. The analysis holds for all configurations except for the case where the target lies on the set with zero measure of the union of the affine hyperplanes $ \cup_{i,j} \partial \Cc_{i,j}$, albeit with arbitrarily high $K$. For that reason, we introduce Assumption \ref{as:target_location} so that we can find a bounded $K$. Theoretically, one should be able to obtain a finite value of $k$ without Assumption \ref{as:target_location} by treating the union of adjacent cells where the border is aligned with $(x-x^*)$ as one cell. In practice, one can consider perturbing the target location within $\ccalC_0$ until Assumption \ref{as:target_location} holds and returning to the original target location when the agent is sufficiently close. As such, we choose to omit this formal discussion because these are corner cases, and they have no effect on the performance in practice. 
\end{remark}
}

\section{Proof of Theorem \ref{thm:main_result}}\label{proof_of_theorem}
In this section, we present the proof of Theorem \ref{thm:main_result}.
Let $N_r(A)\triangleq\bigcup_{p\in A}B_r(p)$, where $B_r(p)\triangleq\{q\colon\Vert q-p\Vert<r\}$ be the \blue{open} $r$-neighborhood of a set $A$.
Further, consider a subset of the free space outside the $\varepsilon$-neighborhood of \emph{any} obstacle, namely,
\begin{equation}
    \mathcal{F}_{>\varepsilon} := \mathcal{F} \backslash  \bigcup_i N_\varepsilon (\mathcal{O}_i).
\end{equation}
\subsection{Invariance of the workspace} \label{sec:omitouter}
\blue{The following Lemma explains why it is possible to omit $\beta_0$ from the dynamics in \eqref{our_dynamics}. 
}
We define the following constants
\begin{equation} \label{equ:betai_lower_adjacent}
    \lambda = \min_{i,x\in \cup_{i,j} \partial \Cc_{i,j}} \beta_i, ~~~ \lambda_0 = \min_{i,x\in \partial \X} \beta_i.
\end{equation}
\begin{lemma}\label{beta_0lemma}
Let $K_{\beta_0}=mP_0\Lambda_0^{m-1}B/\lambda_0^m$, where the $m$ is the number of obstacles and the constants $P_0, \Lambda_0,\lambda_0,$ and $B$ come from \eqref{equ:betai_fz_bound}, \eqref{equ:betai_lower_adjacent}, and \eqref{equ:distance_bound}. For $k>K_{\beta_0}$, $\F$ is invariant under \eqref{our_dynamics}.
\end{lemma}
\begin{proof}
First consider the boundary of an obstacle $\partial\mathcal{O}_i$, for $i = 1, \dots, m$. \blue{Evaluate $\nabla \beta_i^\top \dot x$ with $\dot x$ subject to $\beta_i = 0$. We obtain
\begin{equation}
    \nabla \beta_i^\top \dot x\Big\vert_{\beta_i = 0} =\frac{\fz}{k}\nabla \beta_i^\top(x-x^i),
\end{equation}
} 
which is strictly positive by $\mu_{\min}^i > 0$.

Next, consider the outer obstacle. Without loss of generality, let $\nabla^2\beta_0 = I_n$.
Consider the normal vector $\n = (x-x^*)/\|x-x^*\|$. Evaluate $\n^\top \dot x$ to obtain 
\blue{
\begin{equation} 
\n^\top \dot x = -\beta + \frac{\fz}{k}\sum_{i = 1}^m \bar \beta_i \cdot \n^\top (x-x_i).
\end{equation}
Recalling the bounds \eqref{equ:distance_bound} and \eqref{equ:betai_fz_bound}, $\n^\top \dot x$ is strictly negative when $k > K_{\beta_0}$ defined by
\begin{equation} \label{Kbeta0}
    K_{\beta_0} :=\frac{mP_0\Lambda_0^{m-1}B}{\lambda_0^m}.
\end{equation}
We invoke  Nagumo's Theorem \citep{blanchini1999set}} to complete the proof.
\end{proof}
%
%
\subsection{Isolating obstacles analysis with Voronoi cells} \label{sec:isolating}
\begin{figure}[!t]
\centering
\includegraphics[trim = {2.5cm, 7.5cm, 2.5cm, 7.5cm}, clip,width=\linewidth]{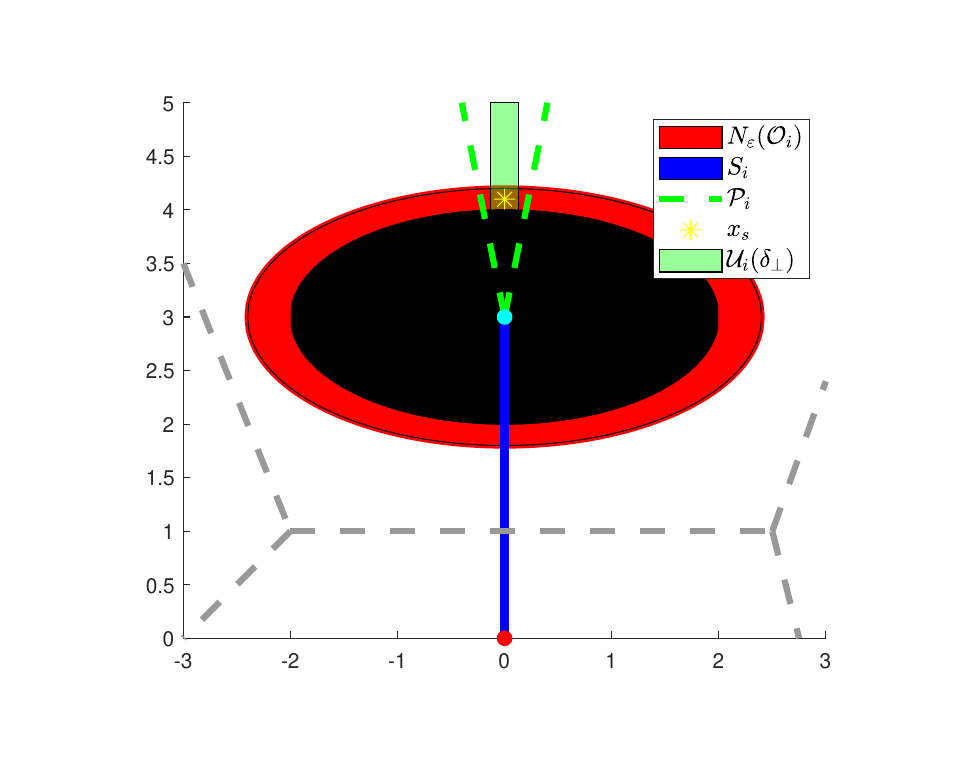} 
\caption{Visualization of the different regions of the spaces described in the proof. The red region around the obstacle in black is the $\varepsilon$ neighborhood of the obstacle. The green dashed lines and the green cylinder define the pyramid and cylinder discussed in Section \ref{sec:local_escape}. The blue line describes $S_i$, which attracts the points in the Voronoi cell whose borders shown by the dashed gray lines. }
\label{zone_vis}
\end{figure}


Recall from Assumption \ref{as:target_location} that $\n_{i,j}$ is a unit vector perpendicular to the border $\partial \ccalC_{i,j}$ such that $\n_{i,j}^\top(x-x^*) > \delta_{i,j}$ for some positive $\delta_{i,j} > 0$.
\blue{
Our next lemma establishes a relative ordering on adjacent obstacles.
}
\begin{lemma} \label{lem:voronoi}
Let Assumption \ref{as:target_location} hold. Further, let $K_{i,j} = m P_0\Lambda_0^{m-1}B\delta_{i,j}^{-1}\lambda^{-m}$, where $m$ is the number of obstacles and the constants $P_0$, $\Lambda_0$, $\lambda$, $B$, and $\delta_{i,j}$ come from \eqref{equ:betai_fz_bound}, \eqref{equ:betai_lower_adjacent}, \eqref{equ:distance_bound}, and Assumption \ref{as:target_location}. Then, for all $k > K_{i,j}$, $\n_{i,j}^\top \dot x < 0$.
\end{lemma}
\begin{proof}
Evaluate $n^\top \dot x$
\begin{equation}
\n^\top \dot x = -\beta \n^\top(x-x^*) + \frac{f_0}{k}\sum_{j = 1}^m \bar \beta_j\n^\top(x-x_i).
\end{equation}
By the definition of the Voronoi cell, $\partial C_i^j$ does not intersect with any obstacle. Invoke \eqref{equ:distance_bound}, \eqref{equ:betai_fz_bound}, and \eqref{equ:betai_lower_adjacent}
to obtain the bound
\begin{equation}
\beta \n^\top (x-x^*) \geq \delta_{i,j} \lambda^m.
\end{equation}
We also bound the expression
\begin{equation}
\fz\sum_{j = 1}^m \bar \beta_j \n^\top (x-x_j) \leq mP_0\Lambda_0^{m-1}B.
\end{equation}
Using these bounds, we define
\begin{equation} \label{equ:K_from_voronoi}
K_{i,j} :=  \frac{ mP_0\Lambda_0^{m-1}B}{\delta_{i,j} \lambda^m}.
\end{equation}
It holds that $\n^\top \dot x$ is negative for all $k>K_{i,j}$
\end{proof}
\blue{In fact, for any hyperplane section that is $\varepsilon$-bounded away from the targets, we can establish a similar traversal property. We formalize this with the following corollary. 
\begin{corollary}\label{cor:hyperplane}
Let $\ccalH$ be a hyperplane section that is contained in $\F_{>\varepsilon}$. Let $n$ be a normal vector to the hyperplane section such that $\n^\top(x-x^*) > \delta$ for some $\delta > 0$ and for any $a, b \in \ccalH$, $n^\top (a-b) = 0$. Then for any $k > mP_0\Lambda_0^{m-1}B\delta^{-1}\lambda^{m}$, it holds that $\n^\top \dot x < 0$ for the flow given in \eqref{our_dynamics}.
\end{corollary}
\begin{proof}
The proof follows identically from Lemma \ref{lem:voronoi}.
\end{proof}
}

Lemma \ref{lem:voronoi} induces a relative ordering on adjacent obstacles. Indeed, this ordering is such that once an agent leaves the Vornonoi cell of an obstacle $\Oc_i$, it will never return to that cell. Eventually, the agent will be in $\Cc_0$ where it will converge to the target (see Section \ref{sec:global_V}). We will first formally define the ordering, then we will prove the claim. 
\blue{
\subsection{Ordering on the obstacles} \label{sec:ordering}
We will construct the ordering $\rho(0), \dots, \rho(m)$ on the Voronoi cells $\mathcal{C}_i$, for $i = 0, \dots, m$. First assign 0 to $\rho(0)$, that is the Voronoi cell containing the target.

Next, define the Voronoi cells for the remaining obstacles $\mathcal{C}_i(\ell)$, where $\ell$ denotes the number of obstacles removed. 
For brevity in notation, we let $\rho_\ell$ denote $\left\{\rho(i)\right\}_{i = 0}^\ell$. We define
\begin{equation} \label{equ:vcell_minus_ell}
\mathcal{C}_i(\ell) := \left\{ x\in \F\bigg|  \forall_{j\notin \rho_\ell \atop j \neq i}\forall_{p\in \Oc_i}\forall_{q\in \Oc_j} \|x-p\| \leq \|x - q\|\right\}.
\end{equation}
Where $\mathcal{C}_i(\ell)$ is defined only for $i = 1, \dots, m$ such that $i\notin \rho_\ell$ and $\ell = 0, \dots, m-2$. By Assumption \ref{as:target_location}, there exists an $i\notin \rho_\ell$ such that $x^* \in \mathcal{C}_i(\ell)$. Assign this $i$ to $\rho(\ell+1)$. This process is repeated for all $\ell$ until there are are just two cells. Assign $i\notin \rho_{m-1}$ to $\rho(m)$ to complete the ordering. This process is shown visusally in Figure \ref{voronoi_ordering}.
}
\begin{figure*}[!t]
\centering
\begin{tabular}{ccc}
\includegraphics[width=.3\linewidth]{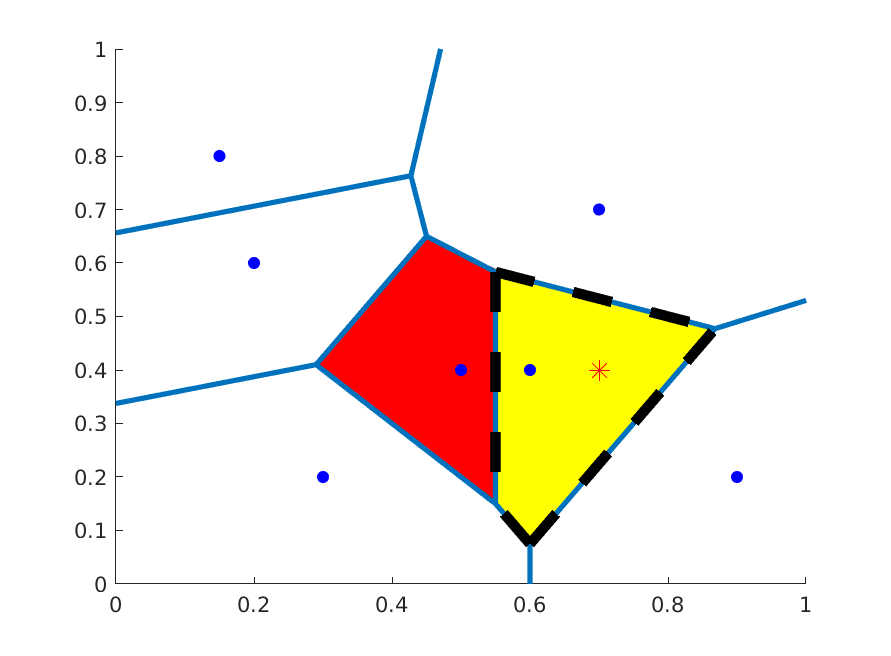} &
\includegraphics[width=.3\linewidth]{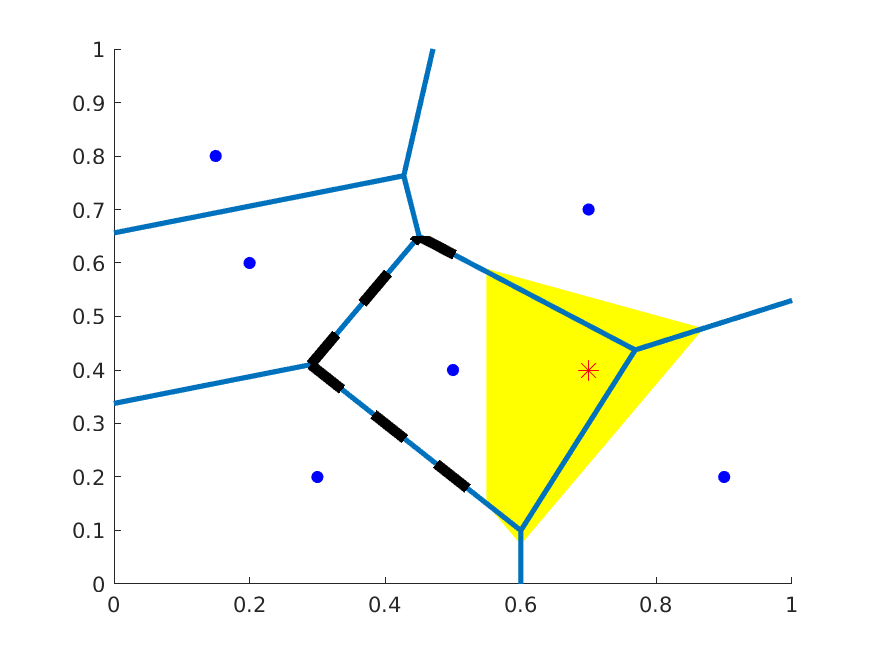} &
\includegraphics[width=.3\linewidth]{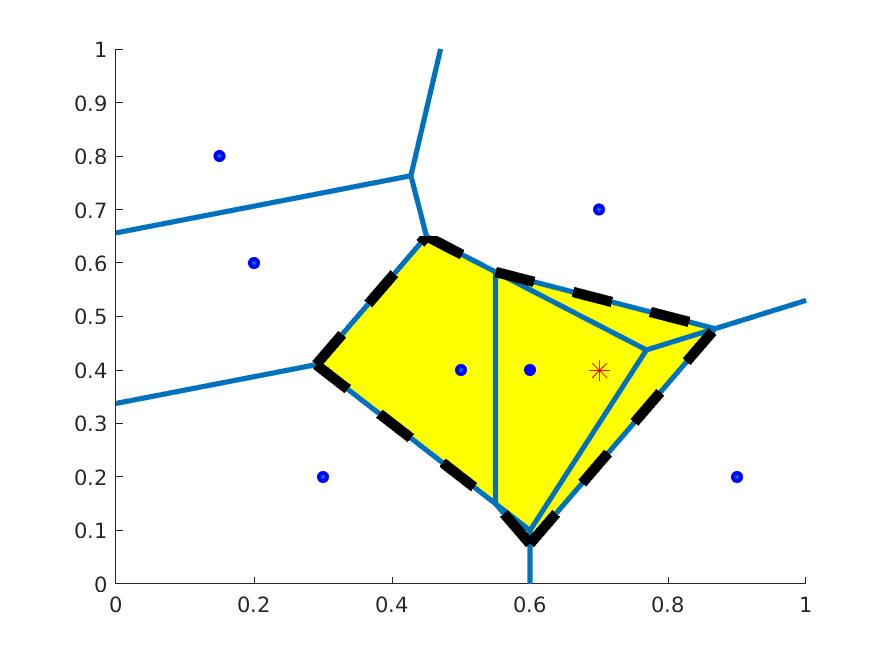}\\
\small (a) & \small (b) & \small (c)
\end{tabular}
\caption{Let the red $*$ represent the location of the target. (a) The cell containing the target is invariant by Lemma \ref{lem:voronoi}, as shown with the dashed border. The red cell is the next obstacle we will add to the invariant set. (b) Consider removing the yellow cell from Fig 2a, and reconstruct the Voronoi diagram. The obstacle lies in the new Voronoi cell corresponding to the red Voronoi cell. The dashed borders are the same in the original Voronoi diagram, and by Lemma \ref{lem:voronoi}, the agent can only enter the red cell from the white neighboring cells. (c) As shown in Lemma \ref{lem:invariant}, the union of the two cells is invariant.}
\label{voronoi_ordering}
\end{figure*}
\blue{
The following lemma establishes that the union $\cup_{i = 0}^n \mathcal{C}_{\rho(i)}$ is invariant for all $n = 0, \dots, m$. 
\begin{lemma} \label{lem:invariant} Let $K_\rho = \max_{i, j} K_{i,j}$, where $K_{i,j}$ is defined as in \eqref{equ:K_from_voronoi}. Let $\rho$ be the ordering defined in Section \ref{sec:ordering}. Then, for all $k>K_\rho$, it holds that $\cup_{i = 0}^n \Cc_{\rho(i)}$ is invariant under the flow \eqref{our_dynamics} for all $n = 0, \dots, m$. 
\end{lemma}
}
\blue{
\begin{proof}
We prove the lemma by induction. That $\Cc_0$ is invariant comes directly from applying Lemma \ref{lem:voronoi} on $\partial \Cc_0$ via Nagumo's Theorem \citep{blanchini1999set}. 

Next, we will establish that 
\begin{equation}\Cc_{\rho(\ell+1)}(\ell) \cup \left(\cup_{i = 0}^\ell \Cc_{\rho(i)}\right) = \cup_{i = 0}^{\ell+1} \Cc_{\rho(i)}. \end{equation}
It suffices to show equivalence on the set difference
\begin{equation} \label{equ:equal_sets}
\begin{split}
\Cc_{\rho(\ell+1)}(\ell) \backslash \left(\cup_{i = 0}^\ell \Cc_{\rho(i)}\right) &= \Cc_{\rho(\ell + 1 )}\backslash \left(\cup_{i = 0}^\ell \Cc_{\rho(i)}\right)\\
&= \Cc_{\rho(\ell +1)} \backslash \cup_{j \in \rho_\ell} \partial \Cc_{\rho(\ell + 1)}^j,
\end{split}
\end{equation}
where the second equality holds because the intersection of two adjacent Voronoi cells only contains the border. 

%

Consider the left hand side of \eqref{equ:equal_sets}. By definition, it must be that both
\begin{equation} \label{equ:set_equiv}
\forall_{j \notin \rho_\ell \atop j \neq \rho(\ell + 1)} \forall_{p \in \Oc_{\rho(\ell + 1)}} \forall_{q \in \Oc_j} \|x-p\| \leq \|x-q\|, \end{equation}
by \eqref{equ:vcell_minus_ell}, and
\begin{equation}
    \forall_{j \in \rho_\ell} \forall_{p \in \Oc_{\rho(\ell + 1)}} \forall_{q \in \Oc_j} \|x-p\| < \|x-q\|,
\end{equation}
by \eqref{equ:vcell}.
%
This is precisely the definition for the right hand side of \eqref{equ:equal_sets}. By the induction hypothesis, and by the fact that $x^* \in \textrm{int}(\Cc_\rho(\ell + 1)(\ell))$ by the construction of $\rho$, it follows from Lemma \ref{lem:voronoi} and Nagumo's theorem that $\cup_{i =0}^{\ell + 1} \Cc_{\rho(i)}$ is invariant. 
\end{proof}
Given Lemma \ref{lem:invariant}, we can define a Directed Acyclic Graph (DAG) on the Voronoi cells which necessarily defines the order in which the target will visit obstacles. 
Let $\mathcal{G}$ be the graph whose nodes consist of the Voronoi cells $C_i$. We build the DAG by adding an edge to the graph $\mathcal{G}$ between $i,j$ whenever $\rho(i) < \rho(j)$.
}

\subsection{Stability of the target} \label{sec:global_V}
Because the obstacles are compact and disjoint, there exists $\varepsilon_0$ such that all $N_\varepsilon(\Oc_i)$ are contained in their corresponding Voronoi cell $\ccalC_i$ and $x^* \in \textbf{int}(\F_{>\varepsilon})$ for all $\varepsilon < \varepsilon_0$. Formally, let
\begin{equation} \label{equ:var_ep_0}
\begin{split}
    \varepsilon_0 = \arg\max_{\varepsilon} & ~~~~\varepsilon \\  
    \textrm{subject to~}& N_\varepsilon(\Oc_i) \subset \ccalC_i, \forall i = 1, \dots, m\\
    & N_\varepsilon (x^*) \subset \ccalC_0 
\end{split}
\end{equation}
Then, within $N_{\varepsilon}(\mathcal{O}_i)$, consider the points on the same side of the obstacle as the target. Formally, define 
\blue{
\begin{equation} \label{equ:si}
    S_i := \left\{x \in \ccalC_i \given \frac{(x-x^*)^\top (x-x_i)}{\|x-x^*\|\cdot \|x-x_i\|} =   -1 \right\}
\end{equation}
}

On the union of these sets, we define a global Lyapunov function candidate $V:\mathcal{F}_{>\varepsilon}\cup ( \cup_i S_i )\to \mathbb{R}$
by
\begin{equation}\label{eqn_lyap_function}
V(x) = \frac{1}{2}\|x-x^*\|^2.
\end{equation}
Note that the Lyapunov function candidate can be selected to be $\fz$. Without loss of generality and for simplicity, we consider the from in \eqref{eqn_lyap_function}. By definition, $V$ is always positive, and it is equal to zero only when $x = x^*$. In the following lemma, we show that $\dot V <0$ in $\mathcal{F}_{>\varepsilon}\cup ( \cup_i S_i )$. Before we present the lemma, we define the following constants.

Similar to \eqref{equ:betai_lower_adjacent}, we define the following constants 
\begin{equation} \label{equ:betai_lower_epsilon}
    \lambda_\varepsilon := \min_{i,x\in \mathcal{F}_{> \varepsilon}} \beta_i ~ \textrm{and }~\lambda_0 := \min_{i,x\in N_{\varepsilon_0}(x^*)} \beta_i.
\end{equation}

\begin{lemma} \label{lem:global_lyap} Choose $\varepsilon \in (0, \min \left\{\lambda_0  m^{-1}B^{-1}\Lambda_0^{-1}, \varepsilon_0\right\})$, where $B$ and $\Lambda_0$ come from \eqref{equ:distance_bound} and \eqref{equ:betai_fz_bound}, and $\varepsilon_0$ comes from \eqref{equ:var_ep_0}. Let $K_\varepsilon := B\lambda_\varepsilon m$. Then, for all $k > K_\varepsilon$ and $x \in \mathcal{F}_{>\varepsilon} \cup_i S_i$, $\dot V <0$ for $V$ defined in \eqref{eqn_lyap_function}.
\end{lemma}
\begin{proof}
See Appendix \ref{appendix:global_lyap}
\end{proof}

Lemma \ref{lem:global_lyap} shows that $\dot V < 0$ for all $x\in \F_{>\varepsilon}\cup \left(\cup_i S_i\right)$. The red region in Figure \ref{zone_vis} shows the set close to an obstacle where $\dot V < 0$ does not necessarily hold. The blue region represents the set $S_i$.

What remains to be shown is that the agent will navigate around the obstacle toward the goal in each cell.

\subsection{Navigating around the obstacle} \label{sec:local_escape}
We show that the agent will navigate around an obstacle $\mathcal{O}_i$ in two steps. First, we consider the angle between the vectors $x-x_i$ and $x_i-x^*$ is increasing using a local Lyapunov function candidate. Second, we show that there is an unstable equilibrium on the side of the obstacle opposite the target. Before we formalize this with the necessary lemmas, we must introduce the following definitions.

We begin by writing $x-x_i$ as the sum of its parallel and perpendicular components to $x_i-x^*$. In particular, we let 
\begin{equation}\label{equ:decomp}
    x-x_i = a(x_i - x^*) + \rhop,
\end{equation}
where $\rhop^\top (x_i-x^*) = 0$ and $a \in \mathbb{R}$.
%
\blue{Consider the following set}
\begin{equation}
    \ccalU_i(\delta_\perp) := \left\{x\in \Cc_i \given a>0,  \|\rhop\|_\infty\leq \delta_\perp  \right\},
\end{equation}
where $\delta_\perp$ is strictly positive.
\blue{Because $a>0$, it follows that this set is defined on the side of the obstacle opposite the target. Next, consider a right regular pyramid with a vertex at $\xi:=\gamma x_i + (1-\gamma) x^*$ for some $\gamma$. For the purpose of this proof, we choose $\gamma$ to be equal to $3/4$. Define $\ccalP_i$ to be 
\begin{equation}
\begin{split}
    \ccalP_i := \arg \min_{\ccalP \in \Xi}& ~~A(\ccalP)\\
    \textrm{subject to} &~~ \ccalU_i(\delta_\perp) \subseteq \ccalP,
\end{split}
\end{equation}
where $A(\cdot)$ denotes the volume of the set and $\Xi$ is the set of regular right pyramids whose vertex is $\xi$. By the definition of the infinity norm $\|\cdot\|_\infty$, the right regular pyramid will have $2(n-1)$ faces, where we recall $n$ is the dimension of $\ccalX$. 

Given $\ccalP_i$, we can consider the $\varepsilon$-frustum, which is the intersection $\ccalP_i\cap \F_{>\varepsilon}$. First consider the faces of the $\varepsilon$-frustum. By choice of $\xi$, it follows that there exists a $\delta_i >0$ such that for each normal vector $\n_{i, \ell}$ to the face $\partial \ccalP_{i,\ell}$ satisfies $\n_{i,\ell}^\top(x-x^*) > \delta_i$ for all $\ell = 1, \dots, 2(n-1)$. Further, since we are bounded away from $\cup_i N_\varepsilon(\Oc_i)$, we can apply Corollary \ref{cor:hyperplane} so that when $K_{i, \ell}> m P_0 \Lambda_0^{m-1}B\delta_{i}^{-1}\lambda^{-m}$, it follows that the agent can only traverse the faces of the pyramid by exiting $\ccalP_i$. 

Next we consider the $\varepsilon$-boundary of the obstacle, and show that the agent can only traverse the boundary by entering $N_\varepsilon(\Oc_i)$. We formalize this with the following corollary. 
\begin{corollary} \label{cor:pyramid}
Let $K = P_0B(\varepsilon^{-1} + (m-1)\lambda_\varepsilon )$. Then on the set $\ccalP_i \cap \partial \F_{>\varepsilon}$, when $k> K$, $(x-x^*)^\top \dot x/\|x-x^*\| < 0$
\end{corollary}
\begin{proof}
The proof follows the arguments made in Lemma \ref{beta_0lemma} with the substitution of $\varepsilon$ for $\beta_i$ and the bound \eqref{equ:betai_lower_epsilon} instead of \eqref{equ:betai_lower_adjacent} for $\beta_j$, for $j \neq i$.
\end{proof}
}
The analysis is broken into two steps. First we consider the region outside the pyramid, namely  $\ccalC_i\backslash \ccalP_i$. We define a local Lyapunov Function Candidate 
$V_i: \mathcal{C}_i\to \mathbb{R}$
\begin{equation}
V_i := 1 + \frac{(x-x_i)^\top (x_i-x^*)}{\|x-x_i\|\cdot \|x_i-x^*\|}
\end{equation}
With the following lemma, we establish that $V_i$ acts as a local Lyapunov function candidate by showing that $\dot V_i$ is strictly negative on $\ccalP_i^c$. \blue{In particular, $V_i$ describes that the agent will asymptotically converge to the points where $x-x_i$ and $x_i - x^*$ are pointing in opposite directions. This is precisely the region $S_i$ defined in \eqref{equ:si} (i.e. the blue line in Figure \ref{zone_vis}), where we know from Lemma \ref{lem:global_lyap} that $x^*$ is asymptotically stable. }
Before we present the lemma, consider the following bounds,
\begin{equation} \label{equ:xi_lower_bound}
    \|x-x_i\|^2\big\vert_{x\in \Cc_i} \geq \theta_i,
\end{equation}
which holds because $x_i \notin \F$, and
\begin{equation} \label{equ:bj_lower_bound}
        \lambda_{i,\perp} = \min_{j \neq i,x\in \Cc_i} \beta_j,
\end{equation}
which holds because we are bounded away from the other obstacles. 
%
\begin{lemma} \label{lem:local_lyap} Given $\delta_\perp > 0$, let $K_i^\perp := 2P_0\lambda_{i,\perp} \delta_\perp^{-2} (1 + 1/\theta_i) (a^2B  +\delta_\perp^2)$, where $\theta_i$ and $\lambda_{i,\perp}$ come from \eqref{equ:xi_lower_bound} and \eqref{equ:bj_lower_bound}, $P_0$ and $B$ come from \eqref{equ:betai_fz_bound} and \eqref{equ:distance_bound}, and $a$ comes from \eqref{equ:decomp}. Then, for all $k > K_i^\perp$, $\dot V_i < 0$ defined on $\ccalC_i\backslash \ccalP_i$ (i.e. outside the pyramid).
\end{lemma}

\begin{proof}
See Appendix \ref{appendix:local_lyap}
\end{proof}
Next we consider $\ccalP_i$. In particular, there are three options for the agent in this region. (i) The agent will remain in $\ccalP_i$ and away form $N_\varepsilon(\Oc_i)$. (ii) The agent will move toward $\ccalP_i \cap N_\varepsilon(\Oc_i)$. (iii) The agent will enter $\ccalC_i \backslash \ccalP_i$. 

(i) is impossible because $\dot V<0$ as a consequence of Lemma \ref{lem:global_lyap}. (iii) means that the region enters $\ccalC_i\backslash \ccalP_i$ and follows the behaviour described in Lemma \ref{lem:local_lyap}. Note also that by Corollory \ref{cor:pyramid}, the agent will not be able to return to the pyramid. 
That leaves us with (ii). For small enough $\varepsilon$, the dynamics in the small region of $\ccalP_i \cap N_\varepsilon(\Oc_i)$ can be approximated by the linear system with dynamics 
\begin{equation}
    \dot x = J(g_\textrm{new})\given_{x_s}(x-x_s),
\end{equation}
where $x_s$ is a critical point and $J$ is the Jacobian. 
To complete the proof of Theorem \ref{thm:main_result}, what remains to be shown is that there is an unstable equilibrium inside this region. We formalize this with the following lemma.
%
%
%
\begin{lemma} \label{lem:unstable}
Choose $\delta_\perp > 0$. There exists a $K_{\rhop, i}$ such that for all $k >K_{\rhop, i}$, following the flow \eqref{our_dynamics} induces an unstable equilibrium on $\ccalP_i\cap N_\varepsilon(\Oc_i)$ (i.e. inside the cone).
\end{lemma}



\begin{proof}
See Appendix \ref{appendix:unstable}
\end{proof}
To conclude the proof of Theorem \ref{thm:main_result}, we first select $\varepsilon \in (0, \min \left\{\lambda_0  m^{-1}B^{-1}\Lambda_0^{-1}, \varepsilon_0\right\}) $, from Lemma \ref{lem:global_lyap}. Then select $K = \max_i \left\{K_{\beta_0}, K_\varepsilon, K_{\rho},  K_i^\perp,K_{\rhop, i}\right\}$, from Lemmas 1, 3-6.

\section{Numerical Results} \label{num_results}
\begin{figure*}[!t] 
\centering
\begin{tabular}{cc}
\includegraphics[trim = {1.5cm, 1cm, 1.5cm, 0},width=.45\linewidth]{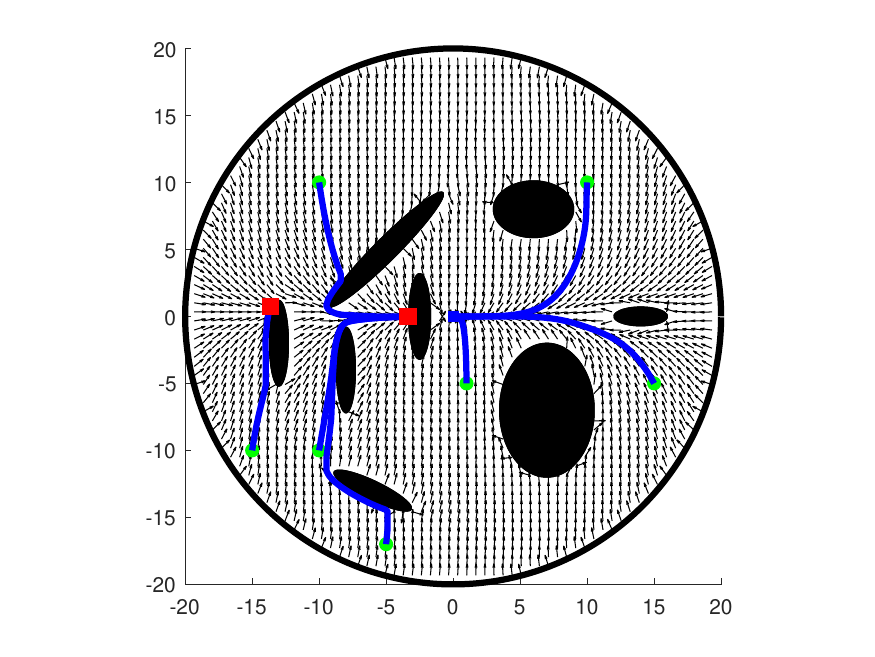} &
\includegraphics[trim = {1.5cm, 1cm, 1.5cm, 0},width=.45\linewidth]{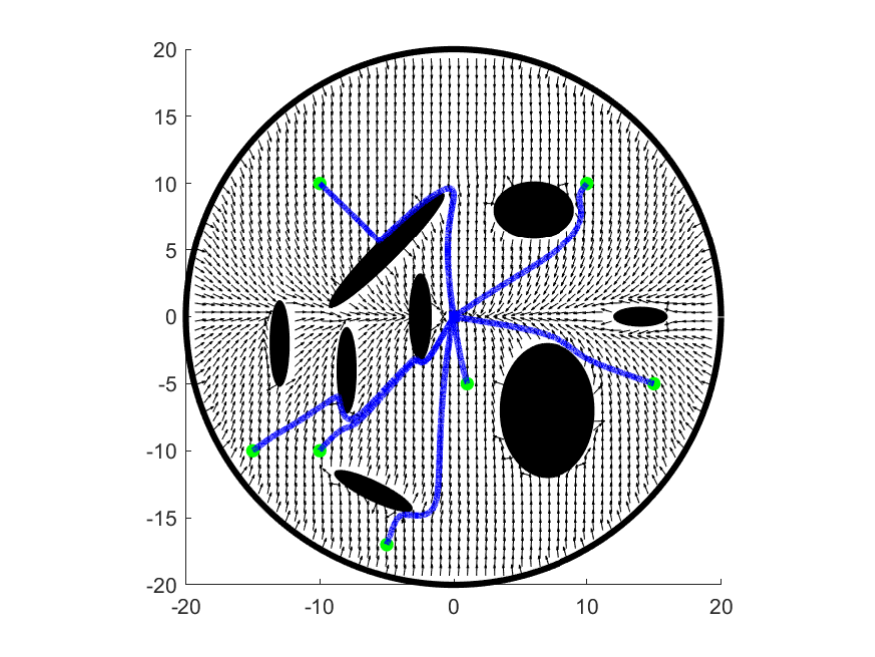} \\
\small (a) & \small (b)
\end{tabular}
\caption{(a)Trajectories generated by following the negative gradient of the RK Potential -- that is $\dot x = g_\textrm{nav}$  -- which is not a navigation function as condition \eqref{equ:condition} is violated. (b)Trajectories generated by following our proposed dynamics, that is $\dot x = g_\textrm{new}$.  Trajectories which converge to a local minimum of $\varphi(x)$ end in a red square. We set $k = 15$ and \blue{$x^*$ is the origin.}}
\label{mult_old_new}
\end{figure*}

In this section, we compare the performance of our proposed dynamics $g_\textrm{new}$ to the performance of navigation function dynamics $g_\textrm{nav}$. We consider a discrete approximation for the flow $\dot x = g(x)$. In practice, the norm of the dynamics is generally very small. This may cause problems numerically when computing the direction as well as taking taking a long time for the agent to reach its target. Hence, what is often used in practice is to normalize the gradient by scaling it by a factor of $(\epsilon + \|g(x)\|)$, where $\epsilon > 0$ \citep{whitcomb1991automatic}. As such the dynamics will be 
\begin{equation} \label{discrete_new}
x_{t+1} = x_t + \eta \frac{g(x_t)}{\|g(x_t)\| + \epsilon} 
\end{equation}
where $\eta$ is a constant step size. We set $\epsilon = 10^{-4}$ and $\eta = 0.01$ in \eqref{discrete_new} for all simulations.

First, we consider a world with eight ellipsoidal obstacles, and we compare the trajectories from several different initial positions. Then, we explore the effect of increasing the number of obstacles of randomly generated ellipsoidal worlds. The obstacles are generated such that condition (\ref{equ:condition}) might fail. Therefore, increasing the number of obstacles results in fewer trajectories following $g_\textrm{nav}$ successfully reaching the target. In contrast, the corrected dynamics performs well even when the number of obstacles is large. 

\subsection{Correcting the Field} \label{sec_traj_plots}
In this section, we show an ellipsoidal world with eight obstacles and several different initialization points. We designed the world such that condition \eqref{equ:condition} does not necessarily hold, thereby eliminating the guarantee that the Rimon Koditschek potential is a navigation function. The maximum to minimum ratio of the eigenvalues of the eight obstacles range between 2 and 50. The radius of the outer obstacle $\beta_0$ is equal to 20. The objective value of the function is chosen to be $\fz(x) = \|x\|^2$. 

Figure \ref{mult_old_new} (a) shows the vector field and some trajectories for the navigation function dynamics $\dot x = g_\textrm{nav}$. Indeed, condition \eqref{equ:condition} is violated. As such, four of the trajectories converge to local minimum appearing behind the obstacles which violate the condition instead of to the target. We selected $k = 15$ because this was the maximum value for $k$ considered in the analysis for worlds which violate the condition \citep{paternain2018navigation}. 

We compare the trajectories of \blue{our proposed dynamics to} the navigation function dynamics where the condition is violated. The Figure \ref{mult_old_new} (b) shows that with the same value of $k = 15$, all of the trajectories converge to the target. The vector field plots show that there is only one stable point, the target located at the origin. This is consistent with Theorem \ref{thm:main_result}.


\subsection{Obstacles in $\R^2$}

\begin{figure*}[!t]
\centering
\begin{tabular}{cc}
\includegraphics[width=.4\linewidth]{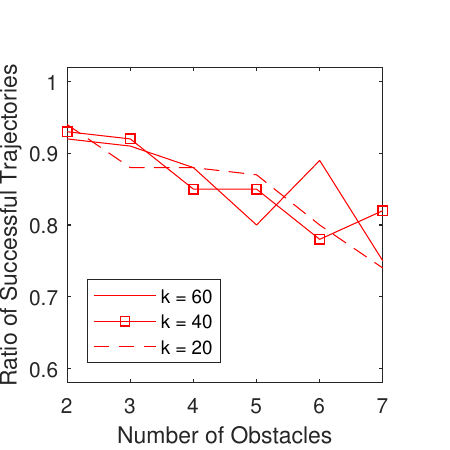} &
\includegraphics[width=.4\linewidth]{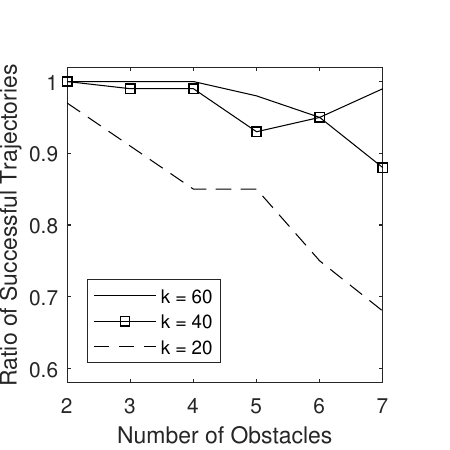} \\
\small (a) & \small (b) 
\end{tabular}
\caption{(a) Uncorrected Dynamics: Regardless of the value $k$, the ratio of successful simulations decreases as the number of obstacles increases. (b) Proposed Dynamics: By increasing $k$, the ratio of successful simulations remains high regardless of the number of obstacles}
\label{m_trend}
\end{figure*}

In this section, we explore the effect of increasing the number of obstacles on the percentage of successful trajectories. We define the external shell to be the a sphere with center $(0,0)$ and radius $r_0$. The center of each ellipsoid is drawn uniformly from $[-r_0/2,r_0/2]^2$. The maximum semiaxis $r_i$ is drawn uniformly from $[r_0/10,r_0/5]$. The positive definite matrices $A_i$ have eigenvalues $1$ and $\mu_i$, where $\mu_i$ is drawn randomly from $[1,r_0/2 ]$. The obstacles are then rotated by $\theta_i$ where $\theta_i$ is drawn randomly from $[-\pi/2,\pi/2]$. The obstacles are redrawn if they overlap. For the objective function, we consider a quadratic cost given by 
\begin{equation}
\fz(x) = (x-x^*)^\top Q (x-x^*).
\end{equation}
where $Q\in \ccalM^{2 \times 2}$ is a diagonal matrix with eigenvalues $\eig(Q) = \{1,\lambda\}$ where $\lambda$ is drawn from $[0,r_0]$. The minimizer of the objective function $x^*$ is drawn uniformly from $[-r_0/2, r_0/2]^2$. The minimizer $x^*$ is redrawn if it is not in the free space. Finally, the initial position is drawn uniformly from
$[-r_0,r_0]^2$ and is redrawn if it is not in the interior of the free space. For our experiments, we set $r_0 = 20$. We then vary number of obstacles $m$ from two to seven. For tuning parameters $k = \{20, 40, 60\}$ we run 100 simulations for each $m \in \{2, \dots , 7\}$. Each simulation is terminated successfully when the norm of the difference of $x_t$ and $x^*$ is less than the step size $\eta = 0.01$. A simulation is terminated unsuccessfully if the agent collides with an obstacle - including the outer boundary - or the number of steps reaches $5\times 10^4$.

Figure \ref{m_trend} (a) shows the the results of the simulation for the uncorrected dynamics. For all values of $k$, the ratio of successful trajectories decreases as the number of obstacles increases. This is due to the fact that the increased number of obstacles increases the probability that there an obstacle violates condition \eqref{equ:condition}. In contrast, Figures \ref{m_trend} (b) shows that as $k$ increases the ratio of successful trials increases. For $k = 40$, the success percentage is always above $85\%$. For $k = 60$, the success percentage is always above $95\%$. 

The poor performance of $k = 20$ in the corrected dynamics is due to the fact that we do not consider the outer obstacle $\beta_0$ in the dynamics -- see Section \ref{beta_0lemma}. Because $g_\textrm{nav}$ includes $\beta_0$ as part of the dynamics, the agent is repelled away from the boundary thereby avoiding collision. In contrast, the correction dynamics avoid this collision by assuming that $k$ is large enough such that the agent is always moving inward when it is close to the outer boundary. As expected, the performance improves significantly with larger values of $k$. 

%
%

\vspace{-10pt}
\section{Conclusions} \label{conclusion}
\vspace{-5pt}
We considered the problem of a point agent navigating to a target with a finite number of ellipsoidal obstacles of arbitrary eccentricity. In particular, we directly proposed dynamics which guarantee asymptotic convergence to the target \blue{from almost every initial condition given mild conditions on the target}. We corroborated our theoretical results with numerical simulations on worlds in $\mathbb{R}^2$. 

There are a number of possible extensions to this work. Apart from the generalizations to the case of disc robots, nonstationary obstacles, multiple agents, and online tuning of the parameter $k$, the ellipsoidal condition itself may be lifted.  Given that the exact form of our proposed dynamics relies on estimating the distance and direction to both the target and obstacle, this approach may be used to extend convergence guarantees to not only convex obstacles, but non convex star obstacles as well. In fact, the explicit ellipsoidal form of the obstacles was used in the calculation of the Jacobian of the dynamics evaluated at a critical point only. Therefore, these results can be extended to convex obstacles by considering a general version of these proofs on the gradients of the \blue{obstacle-defining functions} instead of the $x-x_i$ term. 

In a related work, we have shown empirically that the same approach can be used for star obstacles \citep{kumar2020navigation}.  Extending the convergence results to the case of star obstacles requires \blue{generalizing the proof of Theorem \ref{thm:main_result}. Specifically, Lemmas \ref{lem:voronoi} and \ref{lem:invariant} no longer hold due to the lack of separating hyperplanes between star obstacles. Borders between adjacent cells are not necessarily hyperplanes. Instead, Corrolary \ref{cor:hyperplane} would need to be used to describe a finite number of transitions between the two adjacent cells. This immediately breaks the directed acyclic graph on the obstacles. Instead, obstacles should be considered in pairs where the hyperplane does not exist between them, similar to the procedure described in Remark \ref{rem:adjacent}. }
%
Such a direction would close the gap between navigation with global information by allowing star worlds to be navigated without the need of a diffeomorphism.





\bibliography{bib-Nav_fun}           




\appendix
\section{Proof of Lemma \ref{lem:global_lyap}} \label{appendix:global_lyap}


We have
\begin{equation}
\dot V = \starx^\top \dot x.
\end{equation}
Substituting \eqref{our_dynamics} for $\dot x$, 
\begin{equation} \label{equ:dot_v}
\dot V =-\beta \|x-x^*\|^2 - \frac{\fz}{k}\sum_{j = 1}^m \bar \beta_j(x-x^*)^\top (x-x_j).
\end{equation}
First consider the set $\mathcal{F}_{>\varepsilon}\setminus N_{\varepsilon}(x^*)$.
%
Since we are bounded away from $x-x^*$, we can factor it out of \eqref{equ:dot_v}. Invoking the bounds of \eqref{equ:betai_fz_bound} and \eqref{equ:betai_lower_epsilon}, it holds that $\dot V < 0$ when $K > B\lambda_\varepsilon m/2$. 
%
Next, consider evaluating $\dot V$ on $S_i \cup N_\varepsilon (\ccalO_i)$ for any $i$. Split the summation term of \eqref{equ:dot_v} into 
\begin{equation}
\begin{split}
 \dot V &= -\beta \|x-x^*\|^2 + \frac{\fz}{k}\bar \beta_i (x-x^*)^\top (x-x_i) \\
 & + \frac{\fz}{k} \sum_{j \neq i} \bar \beta_j (x-x^*)^\top (x-x_j)\\
 &\leq  -\beta \|x-x^*\|^2 + \frac{\fz}{k}\bar \beta_i \|x-x^*\|\cdot \|x-x_i\|\\
 & + \frac{\fz}{k} \sum_{j \neq i} \bar \beta_j (x-x^*)^\top (x-x_j)\\
 \end{split},
\end{equation}
where the inequality comes from applying Cauchy Schwartz. It holds that for $K> B\lambda_\varepsilon (m-1)/2 - \beta_i^{-1} \|x-x^*\|\cdot \|x-x_i\|$, $\dot V <0$. This is trivially satisfied when $K  > B\lambda_\varepsilon m/2$, as established earlier.

%

Finally, consider $N_\varepsilon (x^*)$. By Cauchy Schwartz and the bounds \eqref{equ:distance_bound} and \eqref{equ:betai_fz_bound}, we bound
\begin{equation}
    \dot V \leq -\beta\|x-x^*\|^2 + \frac{\|x-x^*\|^3}{2k}mB\Lambda_0.
\end{equation}
%
%
$\dot V$ is negative when $\|x-x^*\| < \lambda_\varepsilon m^{-1} B^{-1}\Lambda_0^{-1}$. 
This concludes the proof.

\section{Proof of Lemma \ref{lem:local_lyap}} \label{appendix:local_lyap}
Evaluate 
%
\begin{align*}
&\dot V_i =\\
&\frac{(x_i-x^*)^\top \dot x}{\|x-x_i\|\cdot \|x_i-x^*\|} - \frac{(x-x_i)^\top(x_i -x^*)(x-x_i)^\top \dot x}{\|x-x_i\|^3\cdot \|x_i-x^*\|}
\end{align*}
Because $x, x^* \in \mathcal{F}$ and $x_i \notin \mathcal{F}$,  $\|x-x_i\|\cdot \|x_i-x^*\|$ and $\beta_j$ for all $j$ are strictly positive. Replace $\dot x$ with \eqref{our_dynamics} to obtain \blue{
\begin{align*}
&\dot V_i =\\
&\frac{\beta}{\|x-x_i\|\cdot \|x_i-x^*\|} \Bigg(-(x_i-x^*)^\top (x-x^*) \\
&+ \frac{(x-x_i)^\top(x_i -x^*)(x-x_i)^\top (x-x^*)}{\|x-x_i\|^2}\\
&+\frac{\fz}{k} \sum_{j =1}^m\frac{1}{\beta_j} \bigg( (x_i-x^*)^\top(x-x_j) \\
&-\frac{(x-x_i)^\top(x_i -x^*)(x-x_i)^\top (x-x_j)}{\|x-x_i\|^2} \bigg)\Bigg)
\end{align*}

Using the fact that $x-x^* = (x-x^i) + (x_i-x^*)$ and the decomposition \eqref{equ:decomp}, the first two terms are strictly negative. Namely, we obtain the bound
\begin{equation}\label{equ:first_two_terms}-\frac{\|x_i-x^*\|^2\cdot \|\rhop\|^2}{a^2 \|x_i-x^*\|^2 + \|\rhop\|^2} \leq -\frac{B\delta_\perp^2}{a^2 B +\delta_\perp^2} < 0,
\end{equation}
where $B$ comes from \eqref{equ:distance_bound}, and we can invoke $\|\textbf{p}\| \geq \delta_\perp$ by the fact that we are in $\ccalC_i\backslash \ccalP_i$.
The terms in the summation where $j = i$ cancel each other out. The summation terms can be upper bounded by invoking \eqref{equ:distance_bound}, \eqref{equ:betai_fz_bound} \eqref{equ:xi_lower_bound}, \eqref{equ:bj_lower_bound}, namely
\begin{equation}
    \fz\sum_{j \neq i}\big(\cdot\big) \leq 2P_0\lambda_{i,\perp}B\cdot\left(1+\frac{1}{\theta_i}\right).
\end{equation}
Selecting 
\begin{equation}\label{equ:Ki}
K_i^\perp:= 2P_0\lambda_{i,\perp}\delta_\perp^{-2}\left(1+ \frac{1}{\theta_i}\right)\left(a^2B+\delta_\perp^2\right) \end{equation} completes the proof.
}

\section{Proof of Lemma \ref{lem:unstable}} \label{appendix:unstable}
By $x\in N_\varepsilon(\Oc_i)$, we have that $\beta_i \leq \varepsilon$.

In the case of one obstacle, there is a critical point where %
\begin{equation} \label{equ:relation}
    \beta_i(x-x^*) = \frac{\fz}{k}(x-x_i).
\end{equation}
This is when $x-x^*$ is aligned with $x-x_i$, when the obstacle is between the agent and the target. 
Let $x_s$ be the point where \eqref{equ:relation} holds. Then we can write for $a > 1$ 
\begin{equation} \label{equ:aligned_a}
    x_s-x^* = a(x_s-x_i).
\end{equation}
This with \eqref{equ:relation} gives the following criterion for the critical point 
\begin{equation} \label{equ:criterion}
    \beta_i(x_s) = \frac{\fz(x_s)}{a k}.
\end{equation}
To show that the point $x_s$ is an unstable equilibrium, consider the Jacobian of \eqref{our_dynamics} $J(g_\textrm{new})$, 
\begin{equation}
\begin{split}
J(x_s) &= -\beta I - \nabla\beta (x - x^*)^\top  \\
& + \frac{1}{k}\nabla\left(\fz \bar \beta_i\right) (x-x_i)^\top +\frac{1}{k}\fz \bar\beta_i I\\
& + \frac{1}{k} \beta_i J\left(\fz \sum_{j \neq i} \prod_{\ell \neq j} \beta_\ell (x-x_\ell)\right)\\
&+ \frac{1}{k} \nabla \beta_i\left(\fz \sum_{j \neq i} (x-x_j) \right)^\top 
\end{split}
\end{equation}
Define $\mathcal{E}$ to be the set of normal vectors that are orthogonal to $(x_s-x_i)$
\begin{equation}
    \mathcal{E}:= \left\{v\in \mathbb{R}^n \left\vert \|v\| = 1,  v^\top (x_s-x_i) = 0\right.\right\}.
\end{equation}
The rank of $\mathcal{E}$ is $n-1$. Let $\left\{v_1, \dots, v_{n-1}\right\}$ be a basis. Consider any $v_i$ and evaluate the Jacobian at $x_s$ where \eqref{equ:relation} holds, 
\begin{equation}
\begin{split}
     v_i^\top &J(g_\textrm{new})v_i \big|_{x_s} = -\beta +\frac{1}{k}\fz \bar\beta_i \\
    &+\frac{1}{k}v_i^\top \beta_i J\left(\fz \sum_{j \neq i} \left(\prod_{\ell \neq j} \beta_\ell\right) (x-x_j)\right)v_i \\
    &= -\frac{\bar \beta_i \fz}{ak} +\frac{1}{k}\fz \bar\beta_i \\
    &+ \frac{1}{ak^2}v_i^\top  J\left(\fz^2 \sum_{j \neq i} \left(\prod_{\ell \neq j} \beta_\ell\right) (x-x_j)\right)v_i 
\end{split}
\end{equation}
We because of \eqref{equ:betai_fz_bound} and  \eqref{equ:distance_bound}, for any unit vector $n$, we can bound the final term as 
\begin{equation}
    \left\|n^\top J\left( \fz^2\sum_{j \neq i} \prod_{\ell \neq j} \beta_\ell (x-x_\ell) \right) n \right\| \leq C_1
\end{equation}
Let $p_0:= \min_{x\notin C_0} \fz(x)$ and recall \eqref{equ:betai_lower_adjacent}. 
As such, for $$K_{\rhop, i_1} = \frac{C_1}{p\varepsilon^{m-1}} \cdot \frac{a}{a-1},$$
with any $k > K_{\rhop, i_1}$, we know that $v_i^\top J(x_s)v_i$ is positive rendering that direction unstable. 

Now consider the unit vector $w$ aligned with $(x_s-x_i)$. In particular, we set $w = (x_s-x_i)/\|x_s-x_i\|$. 
Consider evaluating  $w^\top J(g_\textrm{new})w$ at $x_s$,
\begin{equation}
    \begin{split}
        w^\top &J(g_\textrm{new})w \bigg|_{x_s} = -\beta - w^\top \nabla \beta(x-x^*)^\top w + \frac{1}{k}\fz \bar \beta_i \\ 
        &+ \frac{1}{k}w^\top \nabla \left(\fz \bar\beta_i\right)(x-x_i)^\top w \\
        & + \frac{\beta_i}{k} w^\top J\left(\fz \sum_{j \neq i} \prod_{\ell \neq j} \beta_\ell \right)w\\
        & + \frac{1}{k}w^\top \nabla \beta_i \left(\fz\sum_{j \neq i}\left(\prod_{\ell \neq j}\beta_\ell\right)(x-x_j) \right)w
    \end{split}
\end{equation}
Split the second term using \eqref{equ:nabbeta}, and apply the relations \eqref{equ:relation} and \eqref{equ:aligned_a} to obtain 
\begin{equation}
    \begin{split}
         w^\top &J(g_\textrm{new})w \bigg|_{x_s} = -a^{-1}\bar\beta_i \\
        &+\frac{1}{k} \Bigg( \fz\bar\beta_i\left(1 - a^{-1}\right) \\
        &- \fz a^{-1}w^\top \big(\sum_{j \neq i} \prod_{\ell \neq j} \beta_\ell \nabla \beta_\ell \big)(x-x^*)^\top w\\
        &+\fz a^{-1} \sum_{j \neq i } (x-x_j)^\top w\\
        &+ w^\top \nabla (\fz \bar \beta_i)(x-x_i)^\top w \Bigg)\\
        &+ \frac{1}{k^2} \Bigg( \fz a^{-1}w^\top J\big(\fz \sum_{j \neq i}\prod_{\ell \neq j} \beta_\ell (x-x_j)\big)w\Bigg) \\
        & \leq -\bar \beta_i a^{-1} + C_2k^{-1} + C_3 k^{-2},
    \end{split}
\end{equation}
where 
\begin{equation}
    \begin{split}
        &\Bigg\|\fz\bar\beta_i\left(1 - a^{-1}\right) +\fz a^{-1} \sum_{j \neq i } (x-x_j)^\top w\\
        &- \fz a^{-1}w^\top \big(\sum_{j \neq i} \prod_{\ell \neq j} \beta_\ell \nabla \beta_\ell \big)(x-x^*)^\top w\\
        &+ w^\top \nabla (\fz \bar \beta_i)(x-x_i)^\top w \Bigg\| \leq C_2,
    \end{split}
\end{equation}
and 
\begin{equation}
    \Bigg\| \fz a^{-1}w^\top J\big(\fz \sum_{j \neq i}\prod_{\ell \neq j} \beta_\ell (x-x_j)\big)w\Bigg\| \leq C_3.
\end{equation}
$C_2$ and $C_3$ are finite by \eqref{equ:distance_bound} and \eqref{equ:betai_fz_bound}.
Then, for 
$$K_{\rhop ,i_2} := \max\left\{ \frac{C_2\pm \sqrt{C_1^2+4C_3 \bar\beta_i(x_{s,i})a^{-1}} }{a} \bar \beta_i(x_{s,i})\right\},$$
with any $k> K_{\rhop ,i_2}$, we know that $w^\top J(g_\textrm{new})w$ is negative.
Choose $K_{\rhop , i}= \max\{K_{\rhop, i_1}, K{\rhop, i_2}\}$ to complete the proof.

\end{document}